\newcommand{\cal}[1]{\mathcal{#1}}
\theoremstyle{plain}
\newtheorem{theo}{Theorem}
\newtheorem{lemma}{Lemma}[section]
\newtheorem{proposition}[lemma]{Proposition}
\newtheorem{corollary}[lemma]{Corollary}
\theoremstyle{definition}
\newtheorem{remark}[lemma]{Remark}
\newcommand{\dist}{\operatorname{dist}}
\let\phialt=\phi
\let\phi=\varphi
\let\varphi=\phialt
\begin{document}
\title{Actions of finitely generated groups on compact metric spaces}
\author{Ursula Hamenst\"adt}
\thanks{AMS subject classification: 57S05}
\date{August 30, 2023}

\begin{abstract}
Let $\Gamma$ be a finitely generated group which admits an action 
by homeomorphisms on a compact metrizable space $X$. 
We show that there is a metric on $X$ defining the original topology such 
that for this metric, the action is by bi-Lipschitz transformations. 
\end{abstract}

\maketitle

\section{introduction}

The homeomorphism group ${\rm Homeo}(S^1)$ 
of the circle $S^1$ contains many 
large and interesting subgroups, but there are also many rigidity 
results stating that large classes of groups do not embed
into ${\rm Homeo}(S^1)$. Most of the known rigidity results require
however
that the action is by diffeomorphisms of class at least $C^{1,\alpha}$ for some
$\alpha >0$, but it is generally believed that many of these results
are true in larger generality. 
We refer to the recent book \cite{KK21} for an account of 
what is known to date. 

On the other hand,  for \emph{countable} groups acting on the circle, 
some additional regularity can always be assumed. The following is 
Theorem D of \cite{DKN07}.

\begin{theo}[Deroin, Kleptsyn and Navas]\label{deroin} 
An action of a countable group $\Gamma$ on $S^1$ is 
conjugate to an action by bi-Lipschitz transformations.
\end{theo}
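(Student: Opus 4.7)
The plan is to construct a probability measure $\nu$ on $S^1$ which is stationary for a symmetric random walk on $\Gamma$, and then to use $\nu$ to define a new length metric on $S^1$ for which every element of $\Gamma$ acts by a bi-Lipschitz map. Enumerate $\Gamma=\{g_0,g_1,\ldots\}$ and pick a symmetric probability measure $\mu$ on $\Gamma$ whose support is all of $\Gamma$, for instance a suitably symmetrised geometric sequence in the enumeration. The space of Borel probability measures on the compact metrizable space $S^1$ is weak-$*$ compact and convex, and the averaging operator $\nu\mapsto \sum_{g\in\Gamma}\mu(g)\,g_*\nu$ is a continuous affine self-map, so the Markov--Kakutani fixed point theorem furnishes a $\mu$-stationary measure $\nu_0$ satisfying $\nu_0=\sum_g\mu(g)\,g_*\nu_0$. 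Fixing $g\in\Gamma$, stationarity yields $\nu_0\geq\mu(g)\,g_*\nu_0$ as measures, and symmetry of $\mu$ similarly yields $\nu_0\geq\mu(g)\,g^{-1}_*\nu_0$, which together give the Radon--Nikodym estimate $\mu(g)\leq d(g_*\nu_0)/d\nu_0\leq 1/\mu(g)$ almost everywhere.

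Assume momentarily that $\nu_0$ is atomless and has full support. Then $d_{\nu_0}(x,y):=\nu_0(\text{shorter arc from } x \text{ to } y)$ is a metric inducing the standard topology on $S^1$, and the Radon--Nikodym bound above shows that each $g\in\Gamma$ is bi-Lipschitz with constant $1/\mu(g)$ with respect to $d_{\nu_0}$. A conjugating homeomorphism $\varphi$ is then any orientation-preserving parametrisation pushing Lebesgue measure on $S^1$ forward to $\nu_0$; the action $\varphi^{-1}\Gamma\varphi$ is by bi-Lipschitz transformations for the standard arc-length metric.

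The main obstacle is that an abstractly produced stationary measure need not be atomless nor of full support, and without these properties $d_{\nu_0}$ fails to generate the circle topology. To handle this I would invoke the classification of minimal sets for countable group actions on $S^1$: either $\Gamma$ has a finite orbit, in which case a direct piecewise linear conjugation to the action on the convex hull of that orbit reduces to a finite bi-Lipschitz problem; or $\Gamma$ acts minimally, in which case the closed $\Gamma$-invariant support of $\nu_0$ is forced to equal $S^1$ and the absence of finite orbits rules out atoms; or there is a unique exceptional minimal Cantor set $K\subset S^1$, which is the delicate case. For the exceptional case I would collapse the connected components of $S^1\setminus K$ by a monotone continuous map to semiconjugate the action to a minimal one on a quotient circle, apply the previous construction there, and then reinsert the gaps with lengths prescribed so that the reinsertion is $\Gamma$-equivariant with uniformly bounded distortion; propagation of the Radon--Nikodym bounds through the semiconjugacy is what makes this final step succeed, and this is where I expect the technical heart of the argument to lie.
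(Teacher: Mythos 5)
Your stationary-measure computation is correct as far as it goes: with $\mu$ symmetric and of full support and $\nu_0$ $\mu$-stationary, the bounds $\mu(g)\le d(g_*\nu_0)/d\nu_0\le 1/\mu(g)$ do hold, and in the minimal case $\nu_0$ is automatically of full support and atomless (an atom of maximal mass would generate a finite orbit), so conjugating by the homeomorphism that straightens $\nu_0$ to Lebesgue measure gives a bi-Lipschitz action. The problem is the other two cases, and they are not technicalities. In the finite-orbit case nothing is ``finite'': the stabilizer of a point of the finite orbit is a finite-index subgroup acting on each complementary interval by arbitrary homeomorphisms, so you are facing the full strength of the statement you are proving, now on an interval, and a piecewise linear conjugation adapted to the orbit does nothing about it. In the exceptional case, the step you explicitly defer is the original problem in disguise: to reinsert the gaps equivariantly with uniformly bounded distortion you must choose lengths $a_I>0$ for the countably many complementary intervals $I$ with $\sum_I a_I<\infty$ and $a_{gI}\le C_g\,a_I$ for every $g\in\Gamma$ (the stationary measure cannot help, since the gaps are $\nu_0$-null), and in addition the stabilizer of a gap acts inside that gap as an arbitrary countable group of interval homeomorphisms, which is again the theorem. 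So as written the argument establishes the minimal case only; there is a genuine gap in the other two.

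It is instructive to compare with the route taken in this paper, which sidesteps atoms and support altogether by never invoking an abstract stationary measure: for finitely generated $\Gamma$ one takes the explicit measure $\nu=\sum_{\psi\in\Gamma}e^{-s\,{\rm dist}(\psi,e)}\psi_*\lambda$, a weighted sum of pushforwards of Lebesgue measure, which is atomless and of full support by construction; the bi-Lipschitz bounds come not from stationarity but from left invariance and the triangle inequality for the word metric together with finiteness of the critical exponent (Proposition \ref{local}), and the conjugacy is by the map $\Psi_\mu$ straightening $\nu$, with no case analysis on minimal sets. The price is exactly where finite generation enters: the paper deduces Theorem \ref{deroin} only for finitely generated groups (Corollary \ref{circlemap}) and refers to \cite{DKN07} for the general countable case. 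If you want to salvage your scheme for all countable groups without the case analysis, the natural replacement for stationarity is a summable weight $a\colon\Gamma\to(0,\infty)$ satisfying $\sup_{g}a_{hg}/a_{g}<\infty$ for each fixed $h$ (equivalently, a left-invariant metric on $\Gamma$ of at most exponential growth); with such weights $\nu=\sum_g a_g\,g_*\lambda$ works verbatim as in the paper, atom- and support-free.
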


This result is sharp: There are homeomorphisms of higher dimensional
manifolds which are not conjugate to Lipschitz maps 
\cite{H79}. 
The goal of this note is to point out that from the point of view 
of rigidity of actions on compact metric spaces, Theorem \ref{deroin} 
is not specific to groups acting on $S^1$.

\begin{theo}\label{main} 
Let $\Gamma$ be a finitely generated group acting as a group of 
homeomorphisms on a compact metrizable space $X$. Then there
exists a metric $d$ on $X$ defining the original topology such that
the action is by bi-Lipschitz transformations for $d$.
\end{theo}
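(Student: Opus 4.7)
The plan is to symmetrize an arbitrary compatible metric $d_0$ on $X$ by averaging its pullbacks along the group action, with weights that decay exponentially in word length. Fix a finite symmetric generating set $S$ of $\Gamma$, denote by $|\gamma|$ the word length of $\gamma\in\Gamma$ with respect to $S$, and choose a constant $\lambda>|S|$. My candidate metric is
\[
d(x,y) \;=\; \sum_{\gamma\in\Gamma}\lambda^{-|\gamma|}\,d_0(\gamma x,\gamma y).
\]

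Several verifications are essentially formal. Compactness of $X$ gives a finite $d_0$-diameter $D$, and since there are at most $|S|^n$ elements of word length $n$, the defining series is dominated by $D\sum_{n\ge 0}(|S|/\lambda)^n<\infty$. Keeping only the term $\gamma=e$ yields $d(x,y)\ge d_0(x,y)$, so $d$ is in fact a metric. The bi-Lipschitz property for the action comes from a single reindexing: for $s\in S$, the substitution $\eta=\gamma s$ gives
\[
d(sx,sy)\;=\;\sum_{\eta\in\Gamma}\lambda^{-|\eta s^{-1}|}\,d_0(\eta x,\eta y)\;\le\;\lambda\,d(x,y),
\]
using $|\eta s^{-1}|\ge |\eta|-1$. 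Applying the same bound with $s$ replaced by $s^{-1}$ yields $d(sx,sy)\ge \lambda^{-1}d(x,y)$, so every generator acts $\lambda$-bi-Lipschitzly and hence every $\gamma$ acts $\lambda^{|\gamma|}$-bi-Lipschitzly.

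The point requiring genuine thought, and the main obstacle, is that $d$ induces the original topology on $X$. One direction is immediate: from $d\ge d_0$ the identity $(X,d)\to (X,d_0)$ is $1$-Lipschitz. For the converse I would split the defining series at a large finite level $N$. The tail $\sum_{|\gamma|>N}\lambda^{-|\gamma|}d_0(\gamma x,\gamma y)$ is bounded uniformly in $x,y$ by $D\sum_{n>N}(|S|/\lambda)^n$, which can be made smaller than $\epsilon/2$ by choosing $N$ large. With $N$ fixed, the head is a \emph{finite} sum over $\gamma$ with $|\gamma|\le N$, each such $\gamma$ being a homeomorphism of the compact space $(X,d_0)$ and therefore uniformly continuous. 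Hence for $d_0(x,y)$ sufficiently small every $d_0(\gamma x,\gamma y)$ with $|\gamma|\le N$ is uniformly small, which makes the head less than $\epsilon/2$. Combining the two bounds shows that the identity $(X,d_0)\to (X,d)$ is continuous, completing the argument.
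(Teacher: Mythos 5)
Your construction is essentially the paper's: you average the pullbacks of a compatible metric with weights decaying exponentially in the word metric (your $\lambda^{-|\gamma|}$ with $\lambda>|S|$ plays the role of $e^{-s\,\mathrm{dist}(e,\psi)}$ with $s$ above the critical exponent), obtain the bi-Lipschitz action from the shift estimate on weights, and prove the topology is unchanged by the same head/tail splitting of the series. The argument is correct; the only cosmetic difference is that the paper packages the shift estimate as equivariance of a family of metrics $\delta_p$ indexed by the Cayley graph, which changes nothing of substance.
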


For finitely generated groups, Theorem \ref{deroin}
follow from Theorem \ref{main} as will be discussed at the end of this note.
It is in this conclusion where specific properties of the circle enter.
We do not know if
Theorem \ref{main} holds true for countable infinitely generated groups.

{\bf Acknowledgement:} I am grateful to
Sang-hyun Kim for bringing the article
\cite{DKN07} to my attention and for useful discussions.

\section{Proof of the theorem}\label{proof}

Consider a finitely generated group $\Gamma$.
Denote by ${\cal C}$ the Cayley graph of $\Gamma$ with respect to some
finite generating set. Giving all edges length one gives 
${\cal C}$ the structure of  
a locally finite geodesic metric graph. 
The group $\Gamma$ acts freely and cocompactly from the left on
${\cal C}$ as a group of isometries. Let $\dist$ be the
$\Gamma$- invariant distance
function on ${\cal C}$. 

The \emph{critical exponent} $\delta(\Gamma)$ of
$\Gamma$ is the infimum of all numbers $s>0$ such that the
\emph{Poincar\'e series} 
\[\sum_{\psi\in \Gamma}e^{-s\,{\rm dist}
(y,\psi x)}\]
converges for some and hence all $x,y\in {\cal C}$.
Since vertices of ${\cal C}$ of distance $\ell$ to the identity correspond to 
reduced words in the generating set of length $\ell$, 
the critical exponent of $\Gamma$ is finite.

\begin{lemma}\label{bound}
For any $s>\delta(\Gamma)$ the value of the convergent series
\[\sum_{\psi\in \Gamma}e^{-s\, {\rm dist}(y,\psi x)}\] is bounded
independently of $x,y\in {\cal C}$.
\end{lemma}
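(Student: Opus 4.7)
The plan is to reduce the problem to a single fixed basepoint using cocompactness of the $\Gamma$-action, and then estimate by the triangle inequality.

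First I would invoke cocompactness: since $\Gamma$ acts cocompactly on $\mathcal{C}$, there is a compact set $K\subset \mathcal{C}$ with $\Gamma K = \mathcal{C}$, and I will fix a basepoint $v_0\in K$ (say the identity vertex). Let $D$ be the diameter of $K$. Given arbitrary points $x,y\in \mathcal{C}$, choose $\alpha,\beta\in \Gamma$ with $x' := \alpha^{-1}x \in K$ and $y' := \beta^{-1} y\in K$.

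Since $\Gamma$ acts by isometries, for each $\psi\in\Gamma$ we have $\dist(y,\psi x) = \dist(\beta^{-1}y, \beta^{-1}\psi \alpha \cdot \alpha^{-1} x) = \dist(y', \phi x')$, where $\phi := \beta^{-1}\psi\alpha$. As $\psi$ ranges over $\Gamma$ so does $\phi$, so
\[\sum_{\psi\in \Gamma} e^{-s\,\dist(y,\psi x)} = \sum_{\phi\in \Gamma} e^{-s\,\dist(y',\phi x')}.\]
Next, two applications of the triangle inequality (using that $x',y',v_0\in K$ and that $\phi$ is an isometry) yield
\[\bigl|\dist(y',\phi x') - \dist(v_0,\phi v_0)\bigr| \;\leq\; \dist(y',v_0) + \dist(\phi v_0, \phi x') \;\leq\; 2D.\]
Hence $e^{-s\,\dist(y',\phi x')} \leq e^{2sD}\,e^{-s\,\dist(v_0,\phi v_0)}$, and summing over $\phi$ gives
\[\sum_{\psi\in \Gamma} e^{-s\,\dist(y,\psi x)} \;\leq\; e^{2sD}\sum_{\phi\in \Gamma} e^{-s\,\dist(v_0,\phi v_0)}.\]
The right-hand side is finite by the definition of $\delta(\Gamma)$ and the choice $s>\delta(\Gamma)$, and it is independent of $x,y$, proving the claim.

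There is no real obstacle here; the only point that requires a bit of care is choosing the right conjugation so that the isometric $\Gamma$-action converts the Poincar\'e sum based at an arbitrary pair $(x,y)$ into one based at a pair $(x',y')$ in the compact fundamental set $K$, after which the triangle inequality does all the work.
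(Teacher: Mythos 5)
Your proof is correct. It shares with the paper the key move of using the isometric $\Gamma$-action to reindex the sum and push the basepoints into a compact fundamental set, but the way you conclude differs. The paper simply observes that $\sum_{\psi}e^{-s\,\dist(\zeta y,\psi x)}=\sum_{\psi}e^{-s\,\dist(y,\psi x)}$ for all $\zeta\in\Gamma$, so the series, viewed as a function of $(x,y)$, is $\Gamma$-invariant, and then declares the bound to be immediate from continuity of the distance function and cocompactness of the action. You instead compare the translated series termwise with the Poincar\'e series at the fixed basepoint $v_0$, using the triangle inequality to obtain the explicit factor $e^{2sD}$. Your version is more quantitative and in a sense more complete: a pointwise convergent series of continuous functions need not be bounded on a compact set, so the paper's appeal to ``continuity plus cocompactness'' implicitly relies on exactly the locally uniform termwise estimate $e^{-s\,\dist(y',\phi x')}\leq e^{2sD}e^{-s\,\dist(v_0,\phi v_0)}$ that you supply. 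What the paper's formulation buys is brevity and an emphasis on the invariance of the function $(x,y)\mapsto\sum_{\psi}e^{-s\,\dist(y,\psi x)}$, which parallels the equivariance computations used later for the metrics $\delta_p$; what yours buys is an explicit uniform bound, namely $e^{2sD}\sum_{\phi}e^{-s\,\dist(v_0,\phi v_0)}$, with no unstated analytic step.
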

\begin{proof}
For $\zeta\in \Gamma$ we have
\[\sum_{\psi\in \Gamma}e^{-s\,{\rm dist}(\zeta y,\psi x)}=
\sum_{\psi\in \Gamma}e^{-s\,{\rm dist}(\zeta y,\zeta\psi x)}=
\sum_{\psi\in \Gamma}e^{-s\,{\rm dist}(y,\psi x)}\]
and hence the claim is immediate from continuity of the distance
function and cocompactness of the action of $\Gamma$ on ${\cal C}$.
\end{proof}

Let us now assume that the group $\Gamma$ acts as a group of 
homeomorphisms on the compact metrizable space 
$X$. Choose a metric $\hat \delta$ on $X$ defining its topology. 
By compactness, the diameter of $\hat \delta$ is finite.
View the identity $e$ of $\Gamma$ 
as a  basepoint in $\Gamma\subset {\cal C}$.
For $\psi\in \Gamma$ write 
\[\hat\delta_{\psi}=\hat\delta_e\circ \psi^{-1};\] this defines
a $\Gamma$-equivariant family of distance functions on 
$X$ indexed by the elements of $\Gamma$, with 
$\hat \delta_e=\hat \delta$.
Let $s>\delta(\Gamma)$ and
for $p\in {\cal C}$ define 
\begin{equation}\label{delta}
\delta_p=\sum_{\psi\in \Gamma} e^{-s\,{\rm dist}(p,\psi)}\hat\delta_{\psi};\end{equation}
as the sum of two distance functions is a distance function, in view of Lemma \ref{bound} 
this defines a distance function on $X$. 

The following proposition summarizes the properties of these distance
functions we are interested in.

\begin{proposition}\label{local}
\begin{enumerate}
\item 
The distances $\delta_p$ $(p\in {\cal C})$
are mutually bi-Lipschitz equivalent, with bi-Lipschitz constant
bounded by a continuous $\Gamma$-invariant function on 
${\cal C}\times {\cal C}$, and
they define the original topology on $X$.
\item
For all $p\in {\cal C},\psi\in \Gamma$ 
we have
$\delta_{\psi p}=\delta_p\circ \psi^{-1}$. Furthermore, each $\psi\in \Gamma$
acts on 
$(X,\delta_e)$ as a bi-Lipschitz transformation.
\item Up to adjusting the parameter $s>0$, 
if the action of $\Gamma$ on $(X,\hat \delta)$ is by 
bi-Lipschitz transformations, then the metric $\delta_e$ is bi-Lipschitz
equivalent to $(X,\hat \delta)$.
\end{enumerate}
\end{proposition}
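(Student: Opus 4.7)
The plan is to handle the three parts in order, since each builds on the previous one.

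\textbf{Part (1).} For mutual bi-Lipschitz equivalence I would exploit the triangle inequality on $\mathcal{C}$: for every $\psi\in\Gamma$, $|\dist(p,\psi)-\dist(q,\psi)|\le \dist(p,q)$, hence
\[
e^{-s\,\dist(p,\psi)}\le e^{s\,\dist(p,q)}\,e^{-s\,\dist(q,\psi)}
\]
term by term. Summing yields $\delta_p\le e^{s\,\dist(p,q)}\delta_q$ and symmetrically, and the function $(p,q)\mapsto e^{s\,\dist(p,q)}$ is continuous and $\Gamma$-invariant on $\mathcal{C}\times\mathcal{C}$. For the topology, each $\hat\delta_\psi=\hat\delta_e\circ(\psi^{-1}\times\psi^{-1})$ is continuous on $X\times X$ with the original topology and is bounded by $\mathrm{diam}(\hat\delta)$, so by Lemma \ref{bound} the defining series converges uniformly on $X\times X$. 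Therefore $\delta_p$ is continuous on $X\times X$, the identity $(X,\hat\delta)\to(X,\delta_p)$ is a continuous bijection from a compact space to a Hausdorff space, hence a homeomorphism.

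\textbf{Part (2).} The equivariance identity is pure bookkeeping: substituting $\phi=\psi\eta$ and using that left translation is an isometry of $\mathcal{C}$,
\[
\delta_{\psi p}(x,y)=\sum_{\eta}e^{-s\,\dist(\psi p,\psi\eta)}\hat\delta_{\psi\eta}(x,y)=\sum_{\eta}e^{-s\,\dist(p,\eta)}\hat\delta_\eta(\psi^{-1}x,\psi^{-1}y)=\delta_p\circ\psi^{-1}(x,y),
\]
where the middle step uses $\hat\delta_{\psi\eta}(x,y)=\hat\delta_e(\eta^{-1}\psi^{-1}x,\eta^{-1}\psi^{-1}y)=\hat\delta_\eta\circ\psi^{-1}(x,y)$. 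Setting $p=e$ shows that $\psi$ is an isometry $(X,\delta_e)\to(X,\delta_\psi)$, and combining with (1) gives that $\delta_\psi$ and $\delta_e$ are bi-Lipschitz equivalent with constant at most $e^{s\,\dist(e,\psi)}$, so $\psi$ is bi-Lipschitz on $(X,\delta_e)$.

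\textbf{Part (3).} This is the only part requiring a new input, and it is where the main obstacle lies: we need the critical exponent to dominate the (multiplicative) word-subadditive growth of the bi-Lipschitz constants of the action. Assume $\Gamma$ acts on $(X,\hat\delta)$ by bi-Lipschitz maps, and let $L_0\ge 1$ be the maximum bi-Lipschitz constant of a finite generating set. Submultiplicativity of bi-Lipschitz constants along words gives $L_\psi\le L_0^{\dist(e,\psi)}$, so
\[
\hat\delta_\psi(x,y)=\hat\delta_e(\psi^{-1}x,\psi^{-1}y)\le L_0^{\dist(e,\psi)}\,\hat\delta_e(x,y).
\]
Choose $s>\delta(\Gamma)+\log L_0$, which still exceeds $\delta(\Gamma)$; then
\[
\sum_\psi e^{-s\,\dist(e,\psi)}L_0^{\dist(e,\psi)}=\sum_\psi e^{-(s-\log L_0)\dist(e,\psi)}
\]
converges to some finite $C$ by definition of the critical exponent, giving $\delta_e\le C\,\hat\delta_e$. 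The isolated $\psi=e$ term of the defining series is exactly $\hat\delta_e$, so $\delta_e\ge \hat\delta_e$. This is the promised bi-Lipschitz equivalence, and the condition $s>\delta(\Gamma)+\log L_0$ is the precise content of the phrase ``up to adjusting the parameter $s$''.
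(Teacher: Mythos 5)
Your proposal is correct, and parts (2) and (3) follow the paper's own route almost verbatim: the same change-of-variable computation for equivariance, the same isometry $\psi\colon(X,\delta_e)\to(X,\delta_\psi)$ combined with the mutual bi-Lipschitz bound, and in (3) the same submultiplicativity bound $L_\psi\le L_0^{\dist(e,\psi)}$ with the same enlargement of $s$ (your condition $s>\delta(\Gamma)+\log L_0$ is exactly the paper's $e^{-s}L\le e^{-u}$, $u>\delta$), plus the observation that the $\psi=e$ term gives $\delta_e\ge\hat\delta_e$ for free. The one place you genuinely diverge is the topology statement in (1). The paper argues both directions by hand: $\delta_e\ge\hat\delta_e$ gives continuity of $(X,\delta_e)\to(X,\hat\delta_e)$, and for the converse it fixes $\epsilon>0$, splits the series into a finite set $A$ and a tail controlled by the diameter, and exhibits the finite intersection $\cap_{\psi\in A}\hat B_\psi(x,b)$ as an $\hat\delta$-open set inside the $\delta_e$-ball. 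You instead invoke uniform convergence (Weierstrass test, with the tail bound coming from convergence of the Poincar\'e series and boundedness of $\hat\delta$) to conclude that $\delta_p$ is continuous on $X\times X$ for the original topology, and then use that a continuous bijection from a compact space to a Hausdorff space is a homeomorphism. That is shorter and perfectly valid; the content is the same tail estimate, packaged through compactness of $X$ rather than through an explicit neighborhood. A small bonus of your version of the Lipschitz estimate is that the two-point bound $\delta_p\le e^{s\,\dist(p,q)}\delta_q$ directly produces the continuous $\Gamma$-invariant bounding function $(p,q)\mapsto e^{s\,\dist(p,q)}$ asserted in (1), whereas the paper's stated inequality (\ref{lipschitzbound}) only compares each $\delta_p$ with $\delta_e$. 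Two cosmetic points: your substitution in the equivariance computation names the new summation variable $\phi=\psi\eta$ but then keeps summing over $\eta$, and the uniform convergence should be credited to the convergence of the Poincar\'e series rather than to Lemma \ref{bound} itself; neither affects correctness.
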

\begin{proof} 
By the definition of the 
distances $\delta_p$, for all $\eta\in \Gamma$ we have
\begin{equation}\label{delta2}
\delta_{p}\circ \eta^{-1}=(\sum_\psi e^{-s\,{\rm dist}(p,\psi)}\hat \delta_{e}\circ \psi^{-1})
\circ \eta^{-1}
=\sum_{\eta \psi} e^{-s\,{\rm dist}(\eta p,\eta \psi)}\hat \delta_{\eta \psi}=\delta_{\eta p}\end{equation}
which shows equivariance.

To show that the distances $\delta_p$ are mutually 
bi-Lipschitz equivalent,   
it suffices to observe that for all $p\in {\cal C}$ we have 
\begin{equation}\label{lipschitzbound}
  e^{-s\, {\rm dist}(p,e)}\delta_e\leq \delta_{p}\leq e^{s \,{\rm dist}(p,e)}\delta_e.
\end{equation}
To this end note that by definition and the triangle inequality, for $p \in {\cal C}$ we have 
\[\delta_p=\sum_{\psi\in \Gamma}e^{-s\, {\rm dist}(p,\psi)}\hat \delta_\psi\geq 
e^{-s\, {\rm dist}(p,e)}\sum_{\psi\in \Gamma} e^{-s\, {\rm dist}(e,\psi)} \hat \delta_\psi,\]
which yields
\[\delta_p\geq e^{-s\, {\rm dist}(p,e)}\delta_e.\]
The reverse estimate $\delta_e\geq e^{-s\, d(p,e)}\delta_p$ follows from 
exactly the same argument.

By formula (\ref{delta2}), 
for all $\eta\in \Gamma$  the map 
$\eta:(X,\delta_e)\to (X,\delta_{\eta})$ is an isometry.
It now follows from bi-Lipschitz equivalence of the metrics
$\delta_\psi$ $(\psi \in \Gamma)$ 
that $\psi:(X,\delta_e)\to (X,\delta_e)$ is bi-Lipschitz, with controlled
bi-Lipschitz constant.
The second part of the proposition follows.

We claim that
the distances $\delta_p$ define the original
topology on $X$. To this end  
note that by definition, we have
$\delta_e\geq \hat\delta_e$ and hence the 
identity map $(X,\delta_e)\to (X,\hat\delta_e)$ is one-Lipschitz.
In particular, the identity map $(X,\delta_e)\to (X,\hat\delta_e)$ 
is continuous. 

To show that the identity map $(X,\hat \delta_e)\to (X,\delta_e)$ is 
continuous as well it suffices to show that for every $x\in X$ and for every 
$\epsilon >0$ the open
ball $B_e(x,\epsilon)$ 
of radius $\epsilon$ about $x$ for the
metric $\delta_e$ contains a neighborhood of $x$ 
for the topology defined by the metric
$\hat\delta_e$. 

To this end 
let $D>0$ be the diameter of $\hat\delta_e$. 
For $\epsilon >0$
there is a finite subset $A\subset \Gamma$ so that 
\begin{equation}\label{sum1}
\sum_{\psi\not\in A}e^{-s\,{\rm dist}(e,\psi)}D<\epsilon/2.\end{equation}

Let $b>0$ be such that $\sum_{\psi\in A}e^{-s\,{\rm dist}(e,\psi)}b<\epsilon/2$ and 
let $C=\cap_{\psi\in A}\hat B_\psi(x,b)$ where $\hat B_\psi(x,b)$ denotes 
the open ball of radius $b$ about $x$ for the metric $\hat \delta_{\psi}$.
Note that $C$ is an open neighborhood of $x$ 
for the topology induced by the metric $\hat \delta_e$ 
because the group $\Gamma$
acts on $(X,\hat \delta_e)$ as a group of homeomorphisms and $A$ is finite.

If $y\in C$ then $\hat \delta_\psi(x,y)<b$ for all $\psi\in A$ and hence
$\sum_{\psi\in A}e^{-s\, {\rm dist}(e,\psi)}\hat \delta_\psi(x,y)<\epsilon/2$.
Together with (\ref{sum1}), this yields 
$\delta_e(x,y)<\epsilon$. 
 As $C\subset X$ is open for the topology defined by $\hat \delta_e$, 
this implies that the ball of radius $\epsilon$ about $x$ for the metric
$\delta_e$ contains an open neighborhood of $x$ for $\hat \delta_e$.
Since $\epsilon >0$ was arbitrary, we conclude that 
a subset of $X$ which is open for 
the topology induced by $\delta_e$ also is open for the 
topology induced by $\hat \delta_e$. In other words,  the 
identity $(X,\hat \delta_e)\to (X,\delta_e)$ is indeed continuous.
This completes the proof of part (1) of the proposition. 

To show the third part of the proposition, 
we have to show that if $\Gamma$ acts by bi-Lipschitz transformations then 
up to adjusting $s$, the identity 
$(X,\hat \delta)\to (X,\delta_e)$ is Lipschitz. To this end 
let $\psi_1,\dots,\psi_k$ be the
symmetric generating set defining the Cayley graph ${\cal C}$. 
Assume that the action of $\Gamma$ on $(X,\hat \delta)$ is by
bi-Lipschitz transformations and 
let $L_i\geq 1$ be the bi-Lipschiitz constant of the element $\psi_i$.
Write $L=\max\{L_i\mid i\}$. Then the bi-Lipschitz constant 
of any $\psi\in \Gamma$ does not exceed $L^{{\rm dist}(e,\psi)}$. 

Now assume that $s>0$ is large enough that $e^s>e^{\delta}L$ where
as before, $\delta>0$ is the critical exponent of $\Gamma$, say
$e^{-s}L\leq e^{-u}$ for some $u>\delta$.  
Then for all $x,y\in X$ and all $\psi\in \Gamma$ we have
$e^{-s\,{\rm dist}(e,\psi)}L^{{\rm dist}(e,\psi)}<e^{-u\,{\rm dist}(e,\psi)}$.
As a consequence, the identity 
$(X,\hat \delta)\to (X,e^{-s\,{\rm dist}(e,\psi)}\hat \delta_\psi)$ is 
$e^{-u\, {\rm dist}(e,\psi)}$-Lipschitz. Summing over $\psi\in \Gamma$ then 
yields that the identity 
$(X,\hat \delta)\to (X,\delta_e)$ is $\sum_\psi e^{-u\,{\rm dist}(e,\psi)}$-Lipschitz.
Since this sum converges, Lipschitz equivalence of 
$(X,\hat \delta)$ and $(X,\delta_e)$ follows.  
\end{proof}

\begin{remark} 
The proof of Theorem \ref{main} rests on the existence of a left invariant
metric on the group $\Gamma$ whose growth is bounded from above
by an exponential function. Although the Birkhoff Katukani theorem 
gives a left invariant metric on any countable group, this metric may not 
fulfill a growth condition. We do not know whether Theorem \ref{main} holds
true for countable infinitely generated groups. 
\end{remark}

As an easy consequence we obtain the proof of Theorem \ref{deroin} 
for finitely generated groups.

\begin{corollary}\label{circlemap}
A finitely generated group of homeomorphisms of $S^1$ is conjugate to 
a group of Lipschitz homeomorphisms.
\end{corollary}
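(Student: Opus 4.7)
The plan is to apply Proposition~\ref{local} to $X=S^1$ with $\hat\delta$ taken to be the standard (arc length) metric, obtaining a compatible metric $\delta_e$ on $S^1$ for which the $\Gamma$-action is bi-Lipschitz. The one-dimensional nature of $S^1$ is then exploited to pass from $\delta_e$ to a length metric of finite total length, whose arc length parametrization provides the conjugating homeomorphism.

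The key estimate is that the total $\delta_e$-length of $S^1$ is finite. For any cyclic partition $x_0,x_1,\dots,x_n=x_0$ of $S^1$, interchanging the finite sum with the defining series~\eqref{delta} yields
\begin{equation*}
\sum_{i=0}^{n-1}\delta_e(x_i,x_{i+1})=\sum_{\psi\in\Gamma}e^{-s\,\dist(e,\psi)}\sum_{i=0}^{n-1}\hat\delta\bigl(\psi^{-1}(x_i),\psi^{-1}(x_{i+1})\bigr).
\end{equation*}
For each $\psi\in\Gamma$, the points $\psi^{-1}(x_i)$ again form a cyclic partition of $S^1$, so the inner sum is bounded by the $\hat\delta$-length of $S^1$, which is $2\pi$. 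Taking the supremum over partitions and applying Lemma~\ref{bound} gives $\ell_{\delta_e}(S^1)\le 2\pi\sum_{\psi\in\Gamma}e^{-s\,\dist(e,\psi)}<\infty$.

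I would then pass to the induced length metric $d^L(x,y):=\inf_\gamma\ell_{\delta_e}(\gamma)$. Finiteness of $\ell_{\delta_e}(S^1)$ implies that $\hat\delta$-small sub-arcs of $S^1$ have small $\delta_e$-length, so $d^L$ is a genuine metric that still induces the topology of $S^1$. Since $\delta_e$-lengths transform covariantly under bi-Lipschitz maps, the $\Gamma$-action remains bi-Lipschitz in $d^L$. Now $(S^1,d^L)$ is a compact length space homeomorphic to a circle of finite circumference $L_0$, so arc length parametrization is an isometry $\phi_0:\mathbb{R}/L_0\mathbb{Z}\to(S^1,d^L)$. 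Composing with a linear dilation by $L_0/(2\pi)$ produces a bi-Lipschitz homeomorphism $\phi:(S^1,\hat\delta)\to(S^1,d^L)$.

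Using $\phi$ as the conjugating homeomorphism, each $g\in\Gamma$ becomes $\phi^{-1}g\phi$: a composition of $\phi$, $g$, and $\phi^{-1}$, all of which are bi-Lipschitz between the appropriate metric spaces, hence a bi-Lipschitz, and in particular Lipschitz, transformation of $(S^1,\hat\delta)$. The decisive step, and the place where the one-dimensionality of $S^1$ enters, is the finite-length estimate: it relies on every homeomorphism of $S^1$ preserving the total $\hat\delta$-length of the circle, a phenomenon with no analogue in higher dimensions.
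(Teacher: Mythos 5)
Your argument is correct and is essentially the paper's own proof translated into metric language: the finite total $\delta_e$-length you establish is exactly the mass of the paper's measure $\nu=\sum_{\psi}e^{-s\,\dist(\psi,e)}\psi_*\lambda$, and your rescaled arc-length parametrization of the induced length metric is precisely the paper's conjugating homeomorphism $\Psi_\mu$ with $\mu=\nu/\nu(S^1)$. In both versions the one-dimensional input is the same, namely that a compatible metric on $S^1$ can be replaced by its induced length metric, equivalently by a finite Borel measure of full support without atoms, on which the bi-Lipschitz action persists.
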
 
\begin{proof}
For a fixed basepoint on $S^1$ and the choice of an orientation, 
a volume normalized length metric on $S^1$ is just a Borel 
probability measure $\mu$ on $S^1$ of 
full support and with no atoms. 
The standard normalized Lebesgue measure $\lambda$ on 
$S^1$ corresponds to the standard distance function.

A probability measure $\mu$ on $S^1$ of full support without atoms
defines a homeomorphism $\Psi_\mu:S^1\to S^1$ by
$\Psi_\mu(t)=s$ if $\mu[0,s]=t$. This homeomorphism satisfies 
$(\Psi_\mu)_*\lambda =\mu$. 

Starting with the standard distance $d$ and the  
Lebesgue measure $\lambda$ on $S^1$, 
the distance function $\delta$ constructed in the proof of 
Proposition \ref{local} corresponds to the measure 
\[\nu= \sum_\psi e^{-s\,{\rm dist}(\psi, e)} \psi_*\lambda.\]
Putting $\mu=\nu/\nu(S^1)$, the conjugation 
of the action of $\Gamma$ on $(S^1,d)$ with a Lipschitz action 
is given by the homeomorphism $\Psi_\mu$ considered in 
the previous paragraph.
\end{proof}

\noindent
MATHEMATISCHES INSTITUT DER UNIVERSIT\"AT BONN\\
email: ursula@math.uni-bonn.de

\end{document}